\numberwithin{equation}{section}  
\DeclareMathAlphabet{\curly}{U}{rsfs}{m}{n}  
\theoremstyle{remark}
\theoremstyle{plain}
\newtheorem{lem}{Lemma}[section]
\newtheorem{thm}{Theorem}
\newtheorem{cor}{Corollary}
\numberwithin{equation}{section}
\newcommand{\ZZ}{{\mathbb Z}}
\DeclareMathOperator{\lcm}{lcm}
\DeclareMathOperator{\li}{li}
\renewcommand{\pmod}[1]{\allowbreak\mkern7mu({\operator@font mod}\,\,#1)}
\newcommand{\bal}{\[\begin{aligned}}
\newcommand{\eal}{\end{aligned}\]}
\newcommand{\be}{\begin{equation}}
\newcommand{\ee}{\end{equation}}
\newcommand{\ssum}[1]{\sum_{\substack{#1}}}  
\newcommand{\sprod}[1]{\prod_{\substack{#1}}} 
\renewcommand{\a}{\ensuremath{\alpha}}
\renewcommand{\le}{\leqslant}
\renewcommand{\ge}{\geqslant}
\newcommand{\order}{\asymp}      
\renewcommand{\(}{\left(}
\renewcommand{\)}{\right)}
\newcommand{\pfrac}[2]{\left(\frac{#1}{#2}\right)}  
\newcommand{\bb}{\ensuremath{\mathbf{b}}}
\newcommand{\vv}{\ensuremath{\mathbf{v}}}
\newcommand{\flr}[1]{\left\lfloor #1 \right\rfloor}
\newcommand{\er}{{\rm e}}
\begin{document}

\title{The image of Carmichael's $\lambda$-function}
\author{Kevin Ford, Florian Luca, and Carl Pomerance}
\date{\today}
\address{KF: Department of Mathematics, 1409 West Green Street, University
of Illinois at Urbana-Champaign, Urbana, IL 61801, USA}
\email{ford@math.uiuc.edu}
\address{FL:  Instituto de Mat\'ematicas, UNAM Juriquilla,  Santiago de Quer\'etaro, 76230  Que-r\'etaro de Arteaga, M\'exico and School of Mathematics, University of the Witwatersrand, P. O. Box Wits 2050, South Africa}
\email{fluca@matmor.unam.mx}
\address{
CP: Mathematics Department, 
Dartmouth College,
Hanover, NH 03755, USA}
\email{carl.pomerance@dartmouth.edu}

\thanks{KF was supported in part by National Science Foundation grant
DMS-1201442.  CP was supported in part by NSF grant DMS-1001180.  
Part of this work was done while KF and FL visited Dartmouth
College in Spring, 2013.  They thank the people there for their
hospitality.  
CP gratefully acknowledges
a helpful conversation with Andrew Granville in which the
heuristic argument behind our proof first arose.
The authors also thank
one of the referees for constructive comments which improved the paper.
}
\begin{abstract}
In this paper, we show that the counting function of the set of values of the Carmichael $\lambda$-function is $x/(\log x)^{\eta+o(1)}$, where $\eta=1-(1+\log\log 2)/(\log 2)=0.08607\ldots$.
\end{abstract}
\subjclass[2010]{11A25, 11N25, 11N64}

\maketitle


\section{Introduction}
Euler's function $\varphi$ assigns to a natural number $n$ the order of the group of units of
the ring of integers modulo $n$.  It is of course ubiquitous in number theory, as is its close
cousin $\lambda$, which gives the exponent of the same group.  Already appearing in
Gauss's {\it Disquisitiones Arithmeticae}, $\lambda$ is commonly referred to as
Carmichael's function after R. D. Carmichael, who studied it about a century ago.
(A {\it Carmichael number} $n$ is composite but nevertheless satisfies $a^n\equiv a\pmod n$
for all integers $a$, just as primes do.  Carmichael discovered these numbers which
are characterized by the property that $\lambda(n)\mid n-1$.)

It is interesting to study $\varphi$ and $\lambda$ as functions.  For example, how
easy is it to compute $\varphi(n)$ or $\lambda(n)$ given $n$?  It is indeed easy if we
know the prime factorization of $n$.  Interestingly, we know the converse.
After work of Miller \cite{M}, 
given either $\varphi(n)$ or $\lambda(n)$, it is easy to find the prime factorization of $n$.

Within the realm of ``arithmetic statistics'' one can also ask for the behavior of
$\varphi$ and $\lambda$ on typical inputs $n$, and ask how far this varies from
their values on average.  For $\varphi$, this type of question goes back to the
dawn of the field of probabilistic number theory with the seminal paper of
Schoenberg \cite{S}, while some results in this vein for $\lambda$ are found in 
\cite{EPS}.

One can also ask about the value sets of $\varphi$ and $\lambda$.  That is, what
can one say about the integers which appear as the order or exponent of the groups
$(\ZZ/n\ZZ)^*$?

These are not new questions.
Let $V_\varphi(x)$
denote the number of positive integers $n\le x$ for which $n=\varphi(m)$ for some $m$.  Pillai \cite{Pi}
showed in 1929 that $V_\varphi(x)\le x/(\log x)^{c+o(1)}$ as $x\to\infty$, where $c=(\log 2)/\er$.
On the other hand, since $\varphi(p)=p-1$, $V_\varphi(x)$ is at least $\pi(x+1)$, 
the number of primes in $[1,x+1]$, and so $V_\varphi(x)\ge(1+o(1))x/\log x$.  In one of his earliest
papers, Erd\H os \cite{E} showed that the lower bound is closer to the truth: we have
$V_\varphi(x)=x/(\log x)^{1+o(1)}$ as $x\to\infty$.  This result has since been refined by
a number of authors, including Erd\H os and Hall, Maier and Pomerance, and Ford, see \cite{F}
for the current state of the art.

Essentially the same results hold for the sum-of-divisors function $\sigma$, but only recently
\cite{FLP} were we able to show that there are infinitely many numbers that are simultaneously
values of $\varphi$ and of $\sigma$, thus settling an old problem of Erd\H os.  

In this paper, we address the range problem for Carmichael's 
function $\lambda$.  
From the definition of $\lambda(n)$ as the exponent of the group $(\ZZ/n\ZZ)^*$,
it is immediate that $\lambda(n)\mid\varphi(n)$ and that $\lambda(n)$ is
divisible by the same primes as $\varphi(n)$.  In addition, we have
$$
\lambda(n)={\text{\rm lcm}}[\lambda(p^a): p^a\,\|\, n],
$$
where
$\lambda(p^a)=p^{a-1}(p-1)$ for odd primes $p$ with $a\ge 1$
or $p=2$ and $a\in \{1,2\}$.  Further, $\lambda(2^a)=2^{a-2}$ for $a\ge 3$. 
Put $V_{\lambda}(x)$ for the number of integers $n\le x$ with $n=\lambda(m)$ for some $m$.
Note 
that since $p-1=\lambda(p)$ for all primes $p$, it follows that 
\begin{equation}
\label{eq:low}
V_{\lambda}(x)\ge \pi(x+1)=(1+o(1))\frac{x}{\log x}\quad (x\to\infty),
\end{equation}
as with $\varphi$.  In fact, one might suspect that the story for $\lambda$ is completely
analogous to that of $\varphi$.  As it turns out, this is not the case.

It is fairly easy to see that $V_\varphi(x)=o(x)$ as $x\to\infty$,
since most numbers $n$ are divisible by many different primes, so
most values of $\varphi(n)$ are divisible by a high power of 2.
This argument fails for $\lambda$ and in fact it is not immediately
obvious that $V_\lambda(x)=o(x)$ as $x\to\infty$.  Such a result 
was first shown in \cite{EPS}, where it was established that there
is a positive constant $c$ with $V_\lambda(x)\ll x/(\log x)^c$.
In \cite{FL}, a value of $c$ in this result was computed.
It was shown there that, as $x\to\infty$,
\begin{equation}
\label{eq:up}
V_{\lambda}(x)\le \frac{x}{(\log x)^{\alpha+o(1)}}\quad {\text{\rm holds with}}\quad  \alpha=1-\er(\log 2)/2=0.057913\ldots\,.
\end{equation}
The exponents on the logarithms in the lower and upper bounds \eqref{eq:low} and \eqref{eq:up} were brought closer in 
the recent paper \cite{LP}, where it was shown that, as $x\to\infty$,
$$
\frac{x}{(\log x)^{0.359052}}<V_{\lambda}(x)\le \frac{x}{(\log x)^{\eta+o(1)}}\quad {\text{\rm with}}\quad \eta=1-\frac{1+\log\log 2}{\log 2}=0.08607\ldots.
$$
In Section 2.1 of that paper, a heuristic was presented suggesting that the 
correct exponent of the logarithm should be the number $\eta$.
In the present paper, we confirm the heuristic from \cite{LP} by proving the following theorem. 

\begin{thm}\label{mainthm}
We have $V_\lambda(x)=x(\log x)^{-\eta+o(1)}$, as $x\to\infty$.
\end{thm}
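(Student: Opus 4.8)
The plan is to take the upper bound $V_\lambda(x)\le x(\log x)^{-\eta+o(1)}$ from \cite{LP} as given, so that the whole problem reduces to the matching lower bound $V_\lambda(x)\ge x(\log x)^{-\eta+o(1)}$, which I would establish by producing a large family of squarefree values and counting it directly. The key observation is a clean sufficient condition: if $m=q_1\cdots q_r$ is squarefree and for every prime $q\mid m$ there is a prime $p\equiv1\pmod q$ with $p-1\mid m$, then $m$ is a value. Indeed, letting $n$ be the product of one such covering prime $p$ for each $q$, we get $\lambda(n)=\lcm_p(p-1)$; each $p-1\mid m$ forces $\lcm\mid m$, while the congruences force every $q\mid m$ to divide $\lcm$, so $m\mid\lcm$ and hence $\lambda(n)=m$. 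Thus it suffices to count squarefree $m\le x$ with a prescribed number of prime factors $r$ satisfying this covering condition; since we count values themselves, no multiplicity or distinctness argument is needed.

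I would take $r=\fl{(\log\log x)/\log 2}+O(\log\log\log x)$. Writing $L=\log\log x$, the number of squarefree $m\le x$ with $\omega(m)=r$ is $x(\log x)^{-\eta+o(1)}$: by the Landau--Sathe--Selberg estimates it is of order $\frac{x}{\log x}\frac{L^{r-1}}{(r-1)!}$ up to a factor $(\log x)^{o(1)}$, and Stirling's formula together with $\er L/r\to\er\log 2$ evaluates the exponent, since $(\er\log 2)^{L/\log 2}=(\log x)^{(1+\log\log 2)/\log 2}$ matches the definition of $\eta$. Hence it is enough to show that a proportion $1-o(1)$ of these $m$ satisfy the covering condition. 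Inflating $r$ slightly above $L/\log 2$ by $O(\log\log\log x)$ leaves the exponent unchanged but, as I explain next, is exactly what makes the expected number of covering primes tend to infinity, which turns out to be essential.

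For a fixed prime $q\mid m$, a covering prime corresponds to a divisor $d\mid m/q$ with $qd+1$ prime; there are $2^{r-1}$ candidate divisors, and since $2^{L/\log 2}=\log x$ the heuristic expected count is $\mu\asymp 2^{r-1}/\log x$, which the choice of $r$ forces to satisfy $\mu\gg L$. To make this rigorous I would average over the family: summing the number of covering primes over all admissible $m$ and interchanging summations replaces the thin set $\{d:d\mid m/q\}$ by divisors $d$ ranging freely, turning the inner sum into a count of primes $\equiv1\pmod q$ weighted by the number of ways to complete $qd$ to an admissible $m$. A sieve-sifted form of the Bombieri--Vinogradov theorem, valid once the prime factors of $m$ are capped at a small power of $x$ (which costs nothing in the exponent), then delivers the main term of order $\mu$. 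A parallel second-moment estimate, with the correlation between $qd+1$ and $qd'+1$ bounded by a standard upper-bound sieve, yields variance $\ll\mu$, so by Chebyshev a given $q$ fails to be covered with probability $\ll 1/\mu=o(1/L)$. Summing over the $r\asymp L$ prime factors $q$ shows that all but an $o(1)$-fraction of the family is completely covered, which finishes the count.

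The crux, and the main obstacle, is the equidistribution input. In contrast with the classical range problem for $\varphi$, the covering condition requires primes $p$ whose shifts $p-1$ lie among the divisors of a \emph{single} $m$ — a sparse, multiplicatively rigid target for which a fixed pair $(m,q)$ affords no usable lower bound at all. The argument works only because averaging over the whole family liberates the cofactor $d$, reducing matters to primes in the progression $1\pmod q$; but one must simultaneously keep the level of distribution high enough to reach the range of $q$ and keep $\mu\gg L$, and these constraints pull in opposite directions, leaving essentially no slack and pinning $r$ just above $L/\log 2$. Delicately balancing the level of distribution (hence the cap on the prime factors of $m$), the size of $r$, and the second-moment correlations so that the per-$q$ failure probability beats $1/L$ uniformly is the heart of the matter; the entropy computation and the verification that the discarded $m$ (non-squarefree, or with too few or too large prime factors) are negligible are comparatively routine.
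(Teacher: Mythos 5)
Your reduction to the lower bound, your sufficient ``covering'' criterion (every prime $q\mid m$ divides some $p-1\mid m$ with $p$ prime, whence $m=\lambda(\prod p)$ for squarefree even $m$), and your choice $\omega(m)\approx \log\log x/\log 2$ as the source of $\eta$ all match the paper's underlying heuristic. But there is a genuine gap at the step you yourself flag as the heart of the matter: the claim that Chebyshev gives a per-$q$ failure probability $\ll 1/\mu$. That requires an \emph{asymptotic} second-moment evaluation, $\mathrm{Var}(N_q)\ll \mu$, i.e.\ $\EE[N_q^2]=(1+o(1/\log\log x))\,\mu^2$ uniformly in $q$. A ``standard upper-bound sieve'' cannot deliver this: for the off-diagonal pairs $(d,d')$ it loses a multiplicative constant $>1$ (the sieve constant for two primality conditions) and, worse, singular-series factors of the form $E/\varphi(E)$ with $E$ depending on $q,d,d'$, which contribute $(\log\log x)^{O(1)}$. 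The best obtainable bound is $\EE[N_q^2]\ll \mu^2(\log\log x)^{O(1)}$, so the variance can be as large as $\mu^2(\log\log x)^{O(1)}$ and Chebyshev yields nothing; no choice of $r$ slightly above $\log\log x/\log 2$ repairs this, since the loss is multiplicative in $\mu^2$. (Your first-moment step is also underspecified: to apply Bombieri--Vinogradov you must leave a large ``rough'' portion of $m$ free to be sieved, rather than capping all prime factors of $m$ at a small power of $x$; if the candidate $qd+1$ is a fixed integer there is nothing to average over.)

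The paper's proof is designed precisely to sidestep this obstruction. It never attempts to show that almost all $m$ in the family are covered. Instead it fixes a bounded number $k$ of covering primes, writes $n=\prod_{i<k}a_i\prod_{j<2^k}b_j$ with the $b_j$ smooth and of prescribed $\omega(b_j)=l$, the $a_i$ rough and sieved, and the covering primes $a_iB_i+1$ built from a Venn-diagram decomposition; it then lets $r(n)$ be the number of such representations and applies Cauchy's inequality, $\#\{n:\mu^2(n)=1,\ r(n)>0\}\ge S_1^2/S_2$. The point is that a second moment $S_2\ll S_1^2(\log\log x)^{O(1)}$ --- exactly the quality that upper-bound sieves \emph{can} provide --- costs only a factor $(\log\log x)^{O(1)}$ in the final count, which is absorbed into $(\log x)^{o(1)}$; keeping $k$ bounded keeps all sieve dimensions bounded, and letting $k\to\infty$ afterwards drives the exponent $\beta_k$ down to $\eta$. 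To make your argument rigorous you would have to replace the almost-all/Chebyshev step by such a moment-ratio argument, at which point you essentially recover the paper's proof.
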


Just as results on $V_\varphi(x)$ can be generalized to similar
multiplicative functions, such as $\sigma$, we would expect our
result to be generalizable to functions similar to $\lambda$
enjoying the property $f(mn)=\lcm[f(m),f(n)]$ when $m,n$ are coprime.

Since the upper bound in Theorem \ref{mainthm}
was proved in \cite{LP}, we need only show
that $V_{\lambda}(x)\ge x/(\log x)^{\eta+o(1)}$ as $x\to\infty$.
We remark that in our lower bound argument we will count only
squarefree values of $\lambda$.

The same number $\eta$ in Theorem \ref{mainthm}
appears in an unrelated problem.  As shown by Erd\H os
\cite{Emult}, the number of distinct entries in the multiplication
table for the numbers up to $n$ is $n^2/(\log n)^{\eta+o(1)}$ as $n\to\infty$.  Similarly,
the asymptotic density of the integers with a divisor in $[n,2n]$ is
$1/(\log n)^{\eta+o(1)}$ as $n\to\infty$.  See \cite{Fdiv} and \cite{Fmult} for more on these kinds of
results.  As explained in the heuristic argument presented in \cite{LP},
the source of $\eta$ in the $\lambda$-range problem comes from the
distribution of integers $n$ with about $(1/\log 2)\log\log n$
prime divisors: the number of these numbers $n\in[2,x]$ is
$x/(\log x)^{\eta+o(1)}$ as $x\to\infty$.  Curiously, the number $\eta$
arises in the same way in the multiplication table problem: most entries
in an $n$ by $n$ multiplication table have about  $(1/\log 2)\log\log n$
prime divisors (a heuristic for this is given in the introduction of \cite{Fdiv}).

We mention two related unsolved problems.  Several papers (\cite{BFPS, BL, Fr, PP}) have discussed
the distribution of numbers $n$ such that $n^2$ is a value of $\varphi$;
in the recent paper \cite{PP} it was shown that the number of such
$n\le x$ is between $x/(\log x)^{c_1}$ and $x/(\log x)^{c_2}$, where
$c_1>c_2>0$ are explicit constants.  Is
the count of the shape $x/(\log x)^{c+o(1)}$ for some number $c$?  The
numbers $c_1,c_2$ in \cite{PP} are not especially close.
The analogous problem for $\lambda$ is wide open.  
In fact, it seems
that a reasonable conjecture (from \cite{PP}) is that 
asymptotically all even
numbers $n$ have $n^2$ in the range of $\lambda$.
On the other hand, it has not been proved that there is a lower
bound of the shape $x/(\log x)^c$ with some positive constant $c$ for the number of such numbers $n\le x$.

%
%
\section{Lemmas}
%
%

Here we present some estimates that will be useful in our argument.
To fix notation, for a positive integer $q$ and an integer $a$,
we let $\pi(x;q,a)$ be the number of primes $p\le x$ in the progression $p\equiv a\pmod{q}$, and put
\[
 E^*(x;q) = \max_{y\le x} \Big| \pi(y;q,1)-\frac{\li(y)}{\varphi(q)} \Big|,
\]
where $\li(y)=\int_2^y {\rm d}t/\log t$.  

We also let $P^+(n)$ and $P^-(n)$ denote the largest prime factor of $n$ and the smallest prime factor of
$n$, respectively, with the convention that $P^-(1)=\infty$ and $P^+(1)=0$.
Let $\omega(m)$ be the number of distinct prime factors of $m$, and let $\tau_k(n)$ be the $k$-th divisor function; that is,
the number of ways to write $n=d_1\cdots d_k$ with $d_1,\ldots,d_k$ positive integers.
Let $\mu$ denote the M\"obius function.

First we present an estimate for the sum of reciprocals of integers with a given number of prime factors.

\begin{lem}\label{omega}
 Suppose $x$ is large. Uniformly for $1\le h\le 2\log\log x$,
 \[
  \ssum{P^+(b)\le x \\ \omega(b)=h} \frac{\mu^2(b)}{b} \order \frac{(\log\log x)^h}{h!}.
 \]
\end{lem}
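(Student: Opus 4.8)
The plan is to recognize the sum as an elementary symmetric function of the reciprocals of the primes up to $x$, and to bound it above and below by its main term. Writing $S_k=\sum_{p\le x}p^{-k}$ for the $k$-th power sum over primes $p\le x$, the squarefree $b$ with $\omega(b)=h$ and $P^+(b)\le x$ are precisely the products of $h$ distinct primes from $[2,x]$, so
\[
 \ssum{P^+(b)\le x \\ \omega(b)=h}\frac{\mu^2(b)}{b}
 = e_h := \frac1{h!}\ssum{p_1,\dots,p_h\le x \\ p_1,\dots,p_h \text{ distinct}}\frac1{p_1\cdots p_h}.
\]
By Mertens' theorem $S_1=\log\log x+O(1)\asymp\log\log x$, while $S_2=O(1)$. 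Since $S_1^h$ is the sum of $1/(p_1\cdots p_h)$ over \emph{all} ordered $h$-tuples of primes $\le x$, dropping the distinctness condition gives at once the upper bound $h!\,e_h\le S_1^h$. Because $h\le2\log\log x$, we have $(S_1/\log\log x)^h=(1+O(1/\log\log x))^h=e^{O(1)}$, so $S_1^h\asymp(\log\log x)^h$ and hence $e_h\le S_1^h/h!\asymp(\log\log x)^h/h!$.

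For the matching lower bound I would estimate the ordered tuples that are discarded. A union bound over the pair of coordinates forced to coincide shows that the contribution of ordered $h$-tuples with a repeated entry is at most $\binom{h}{2}S_2S_1^{h-2}$, so that
\[
 h!\,e_h \ge S_1^h-\binom{h}{2}S_2S_1^{h-2}
 = S_1^h\Big(1-\binom{h}{2}\frac{S_2}{S_1^2}\Big).
\]
This is exactly where the hypothesis $h\le2\log\log x$ is used: it forces $h\asymp S_1$, so that $\binom{h}{2}/S_1^2=O(1)$ and the correction term stays bounded. To make the bracket bounded away from $0$ uniformly in $h$, I would first restrict the sum to those $b$ with $P^-(b)>y$ for a large absolute constant $y$; dropping this restriction only increases the sum, so it is harmless for a lower bound. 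This replaces $S_1,S_2$ by $T_1=\sum_{y<p\le x}p^{-1}\asymp\log\log x$ and $T_2=\sum_{y<p\le x}p^{-2}\le\sum_{p>y}p^{-2}$, and the latter can be made as small as we please by taking $y$ large. Then $\binom{h}{2}T_2/T_1^2\le(2+o(1))T_2<\tfrac12$, whence the restricted sum is $\gg T_1^h/h!\asymp(\log\log x)^h/h!$. Combining the two bounds gives the claimed order of magnitude.

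The only genuine obstacle is the lower bound, and within it, keeping the repetition correction under control across the entire range $1\le h\le2\log\log x$; the rest is bookkeeping with Mertens' estimates. Excluding the small prime factors is a convenient way to avoid any delicate numerical comparison (one can instead verify directly that $2S_2<1$, but it is cleaner not to rely on this), and the cap $h\le2\log\log x$ is precisely what guarantees $\binom{h}{2}=O(S_1^2)$, so that the main term $S_1^h/h!$ is never swamped by the correction.
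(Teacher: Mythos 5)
Your proof is correct and follows essentially the same route as the paper: identify the sum with $\frac{1}{h!}\big(\sum_{p\le x}1/p\big)^h$ minus the contribution of tuples with a repeated prime, which is controlled by $\binom{h}{2}S_2S_1^{h-2}$, with the cap $h\le 2\log\log x$ keeping the correction bounded. The only divergence is cosmetic: the paper retains all primes and uses the numerical fact $\sum_p p^{-2}<0.46$ to make the bracketed factor at least $0.08$, whereas you restrict to $P^-(b)>y$ for a large constant $y$ to make the correction term small; both are valid.
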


\begin{proof}
 The upper bound follows very easily from
 \[
   \ssum{P^+(b)\le x \\ \omega(b)=h} \frac{\mu^2(b)}{b} \le \frac{1}{h!} \Big( \sum_{p\le x} \frac{1}{p} \Big)^h
   = \frac{(\log\log x + O(1))^h}{h!} \order \frac{(\log\log x)^h}{h!}
 \]
 upon using Mertens' theorem and the given upper bound on $h$.
 For the lower bound we have
 \[
   \ssum{P^+(b)\le x \\ \omega(b)=h } \frac{\mu^2(b)}{b} \ge \frac{1}{h!} \Bigg( \sum_{p\le x} \frac{1}{p} \Bigg)^h
   \Bigg[ 1 - \binom{h}{2} \Bigg( \sum_{p\le x} \frac{1}{p} \Bigg)^{-2} \sum_{p} \frac{1}{p^2} \Bigg].
 \]
 Again, the sums of $1/p$ are each $\log\log x + O(1)$.
The sum of $1/p^2$ is smaller than $0.46$, hence for large enough $x$ the bracketed expression
is at least $0.08$, and the desired lower bound follows.
\end{proof}

Next, we recall (see e.g., \cite[Ch. 28]{Da}) the well-known theorem of Bombieri and Vinogradov,
and then we prove a useful corollary.

\begin{lem}\label{BV}
 For any number $A>0$ there is a number $B>0$ so that for $x\ge2$,
 \[
  \sum_{q\le \sqrt{x} (\log x)^{-B}} E^*(x;q) \ll_A \frac{x}{(\log x)^A}.
 \]
\end{lem}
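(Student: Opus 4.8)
The plan is to deduce the stated estimate from the $\psi$-function form of the Bombieri--Vinogradov theorem, whose proof rests on Vaughan's identity and the large sieve; since the bound is claimed only for the residue class $a=1$, it suffices to establish the stronger maximal version with $\psi$ in place of $\pi$. First I would pass from the prime-counting function to the Chebyshev function $\psi(y;q,1)=\sum_{n\le y,\,n\equiv 1\,(q)}\Lambda(n)$. The prime powers $p^k$ with $k\ge 2$ contribute $O(\sqrt x)$ to each $\psi(y;q,1)$, so after summing over $q\le Q:=\sqrt x(\log x)^{-B}$ they cost only $O(Q\sqrt x)=O(x(\log x)^{-B})$, and partial summation then converts a bound for $\max_{y\le x}|\psi(y;q,1)-y/\varphi(q)|$ into one for $E^*(x;q)$ at the expense of a factor $\log x$, absorbed by enlarging $B$. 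Thus it is enough to prove
\[
 \sum_{q\le Q}\ \max_{y\le x}\Big|\psi(y;q,1)-\frac{y}{\varphi(q)}\Big| \ll_A \frac{x}{(\log x)^A}.
\]

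Next I would expand into Dirichlet characters. Orthogonality expresses $\psi(y;q,1)-y/\varphi(q)$ as $\varphi(q)^{-1}\sum_{\chi\ne\chi_0}\psi(y,\chi)$ plus the negligible terms $\varphi(q)^{-1}(\psi(y)-y)$, handled by the prime number theorem with classical error term, and $O(\varphi(q)^{-1}\log^2 x)$ from the prime powers dividing $q$; here $\psi(y,\chi)=\sum_{n\le y}\Lambda(n)\chi(n)$. Each $\chi\bmod q$ is induced by a unique primitive character of conductor $d\mid q$, and $\psi(y,\chi)$ differs from the corresponding primitive $\psi(y,\chi^*)$ by $O((\log x)(\log q))$. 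Rearranging the $q$-sum, the problem reduces to bounding
\[
 \sum_{d\le Q}\frac{d}{\varphi(d)}\ \sum_{\chi\bmod d}^{*}\ \max_{y\le x}\big|\psi(y,\chi)\big|,
\]
where $\sum^{*}$ denotes a sum over primitive characters.

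For the small moduli $d\le (\log x)^{C}$ I would invoke the Siegel--Walfisz theorem, which gives $\max_{y\le x}|\psi(y,\chi)|\ll_N x(\log x)^{-N}$ for every $N$; summing over these $d$ contributes an acceptable total once $C$ is chosen in terms of $A$. For the remaining range $(\log x)^C<d\le Q$ I would apply Vaughan's identity to split $\psi(y,\chi)$ into ``Type I'' sums $\sum_m a_m\sum_n\chi(mn)$ with one smooth variable and ``Type II'' bilinear sums $\sum_m\sum_n a_m b_n\chi(mn)$. After handling the maximum over $y$ by a standard device (Gallagher's lemma, or a Perron integral together with the hybrid form of the large sieve) and applying Cauchy--Schwarz, each piece is estimated by the multiplicative large sieve inequality
\[
 \sum_{d\le Q}\frac{d}{\varphi(d)}\ \sum_{\chi\bmod d}^{*}\Big|\sum_{M<n\le M+N}c_n\chi(n)\Big|^2 \ll (N+Q^2)\sum_{M<n\le M+N}|c_n|^2.
\]

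The main obstacle is balancing the ranges in the bilinear estimate. The large sieve produces the factor $N+Q^2$, and to beat $x(\log x)^{-A}$ one needs $Q^2\le x$; this is exactly why the cutoff is taken at $Q=\sqrt x(\log x)^{-B}$ rather than at $\sqrt x$, the deficit $(\log x)^{-B}$ supplying the saving that outweighs the powers of $\log x$ lost in the decomposition. The truncation parameters in Vaughan's identity must be chosen so that the Type I sums are short enough to be controlled by Siegel--Walfisz and the P\'olya--Vinogradov inequality, while the Type II sums land in the comfortable range where both variables exceed $x^{1/4}$. The uniform maximum over $y\le x$ inside the sum is the delicate point, forcing the use of the Dirichlet-polynomial (hybrid) large sieve rather than its bare version.
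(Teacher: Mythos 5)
The paper offers no proof of this lemma: it is the Bombieri--Vinogradov theorem (stated only for the residue class $a=1$), quoted directly from the cited reference \cite[Ch.~28]{Da}. Your outline --- reduction to $\psi$, orthogonality, passage to primitive characters, Siegel--Walfisz for small conductors, Vaughan's identity plus the large sieve for the rest, with the maximum over $y$ handled by a hybrid/Gallagher-type device --- is precisely the proof given in that reference, so as an approach it matches the source the authors rely on.

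One intermediate claim in your sketch is, however, false as written. After regrouping by the inducing primitive character, the weight attached to a conductor $d$ is $\sum_{q\le Q,\,d\mid q}1/\varphi(q)\ll(\log x)/\varphi(d)$, not $d/\varphi(d)$. The quantity you display, $\sum_{d\le Q}\tfrac{d}{\varphi(d)}\sum^{*}_{\chi\bmod d}\max_{y\le x}|\psi(y,\chi)|$, cannot be bounded by $x(\log x)^{-A}$: the basic mean value theorem gives for it only $(x+x^{5/6}Q+x^{1/2}Q^{2})(\log x)^{O(1)}$, which for $Q=\sqrt{x}(\log x)^{-B}$ is of order $x^{3/2}(\log x)^{O(1)}$, and this order of magnitude is expected to be genuine since each $\max_{y}|\psi(y,\chi)|$ is typically of size about $\sqrt{x}$. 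So ``the problem reduces to bounding'' that sum is not a reduction one can complete. The $d/\varphi(d)$ weight belongs only inside a dyadic block $D<d\le 2D$, where the large sieve yields $\sum_{D<d\le2D}\tfrac{d}{\varphi(d)}\sum^{*}_{\chi}\max_{y}|\psi(y,\chi)|\ll(x+x^{5/6}D+x^{1/2}D^{2})(\log x)^{O(1)}$; one then multiplies by $(\log x)/D$ to restore the true weight $1/\varphi(d)$ and sums over the $O(\log x)$ blocks. It is exactly this division by $D$ that converts $x^{1/2}D^{2}$ into $x^{1/2}D\le x^{1/2}Q=x(\log x)^{-B}$ and is the reason the cutoff $\sqrt{x}(\log x)^{-B}$ succeeds. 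With that normalization repaired (and the dyadic decomposition made explicit), your sketch is the standard argument.
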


\begin{cor}\label{BVcor}
 For any integer $k\ge 1$ and number $A>0$ we have for all $x\ge2$,
 \[
  \sum_{q\le x^{1/3}} \tau_k(q) E^*(x;q) \ll_{k,A} \frac{x}{(\log x)^A}.
 \]
\end{cor}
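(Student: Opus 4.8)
The plan is to deduce the corollary from the Bombieri--Vinogradov theorem (Lemma \ref{BV}) by stripping off the divisor weight $\tau_k(q)$ with Cauchy--Schwarz. The idea is to split $E^*(x;q)=E^*(x;q)^{1/2}\cdot E^*(x;q)^{1/2}$, attaching $\tau_k(q)$ to one factor (which we then control by a crude pointwise bound) and leaving the other factor bare (which we control on average by Lemma \ref{BV}). The key structural fact is that $E^*(x;q)$ is small \emph{on average} but can be bounded trivially by $x/q$ \emph{pointwise}, and Cauchy--Schwarz interpolates between the two so as to absorb the mild weight $\tau_k(q)$.

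First I would record the trivial bound. For $y\le x$ one has $\pi(y;q,1)\le 1+y/q$ and $\li(y)/\varphi(q)\le \li(x)/\varphi(q)\ll x/(\varphi(q)\log x)$; since $\varphi(q)\gg q/\log\log x$, this gives $E^*(x;q)\ll x/q$ uniformly for $1\le q\le x^{1/3}$. Then Cauchy--Schwarz yields
\[
\sum_{q\le x^{1/3}}\tau_k(q)E^*(x;q)
\le\Big(\sum_{q\le x^{1/3}}\tau_k(q)^2 E^*(x;q)\Big)^{1/2}\Big(\sum_{q\le x^{1/3}}E^*(x;q)\Big)^{1/2}.
\]
In the first factor I insert $E^*(x;q)\ll x/q$ and use the standard divisor estimate $\sum_{q\le X}\tau_k(q)^2/q\ll_k(\log X)^{k^2}$ to get $\sum_{q\le x^{1/3}}\tau_k(q)^2 E^*(x;q)\ll_k x(\log x)^{k^2}$. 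For the second factor I note that once $x$ is large we have $x^{1/3}\le\sqrt{x}(\log x)^{-B}$ for every fixed $B$, so Lemma \ref{BV}, applied with a parameter $A'$ to be chosen, gives $\sum_{q\le x^{1/3}}E^*(x;q)\ll_{A'}x/(\log x)^{A'}$. Combining the two factors bounds the left-hand side by $\ll_{k,A'}x(\log x)^{(k^2-A')/2}$, and taking $A'=k^2+2A$ produces the exponent $-A$. Values of $x$ in any bounded range are absorbed into the implied constant.

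The only points that require care are bookkeeping rather than genuine obstacles: one must verify that the range $q\le x^{1/3}$ indeed sits inside the Bombieri--Vinogradov range $q\le\sqrt{x}(\log x)^{-B}$ for all large $x$ (which it does since $(\log x)^{B}\ll x^{1/6}$), and one must balance the loss $(\log x)^{k^2}$ coming from the weighted divisor sum against a sufficiently strong, i.e. large-$A'$, instance of Lemma \ref{BV}. Because Lemma \ref{BV} is available for \emph{every} $A'$, this balancing always succeeds, so there is no real difficulty; the whole argument is the interpolation between the trivial pointwise bound and the average bound of Lemma \ref{BV}.
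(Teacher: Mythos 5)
Your proposal is correct and follows essentially the same route as the paper: Cauchy--Schwarz splitting $\tau_k(q)E^*(x;q)$ into $\tau_k(q)E^*(x;q)^{1/2}\cdot E^*(x;q)^{1/2}$, the trivial bound $E^*(x;q)\ll x/q$ together with $\sum_{q\le y}\tau_k(q)^2/q\ll_k(\log y)^{k^2}$ for the weighted factor, and Lemma \ref{BV} with the parameter $2A+k^2$ for the bare factor. The only cosmetic difference is that you spell out the verification of the trivial bound and the inclusion $x^{1/3}\le\sqrt{x}(\log x)^{-B}$, which the paper leaves implicit.
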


\begin{proof}
Apply Lemma \ref{BV} with $A$ replaced by $2A+k^2$, Cauchy's inequality, the trivial bound $|E^*(x;q)| \ll x/q$ and
the easy bound
\begin{equation}\label{tausum}
 \sum_{q\le y} \frac{\tau^2_{k}(q)}{q} \ll_k (\log y)^{k^2},
\end{equation}
to get 
\begin{align*}
\left(\sum_{q\le x^{1/3}} \tau_k(q) E^*(x;q)\right)^2 
& \le  \left(\sum_{q\le x^{1/3}} \tau_k(q)^2 |E^*(x;q)|\right) 
\left(\sum_{q\le x^{1/3}} |E^*(x;q)|\right)\\
& \ll_{k,A}   
x\left(\sum_{q\le x^{1/3}} \frac{\tau_k(q)^2}{q}\right)\frac{x}{(\log x)^{2A+k^2}}\\
 & \ll_{k,A}   \frac{x^2}{(\log x)^{2A}},
\end{align*}
which leads to the desired conclusion. 
\end{proof}

Finally, we need a lower bound from sieve theory.

\begin{lem}\label{sievelower}
There are absolute constants $c_1>0$ and $c_2\ge 2$ so that for $y\ge c_2$, 
$y^3\le x$, and any even positive 
 integer $b$, we have
\[
\ssum{n\in(x,2x]\\bn+1\text{ prime}\\P^-(n)>y}1
 \ge \frac{c_1 bx}{\varphi(b)\log (bx)\log y} - 2\sum_{m\le y^3}
  3^{\omega(m)}E^*(2bx;bm).
 \]
\end{lem}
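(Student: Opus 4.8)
The plan is to recast the quantity as a sifting problem for shifted primes and apply a lower-bound sieve. Fix the even integer $b$ and write $X=(\li(2bx+1)-\li(bx+1))/\varphi(b)$, so that by the prime number theorem $X\asymp bx/(\varphi(b)\log(bx))$. The first step is the observation that for a squarefree $m$ with $P^+(m)\le y$ one has $m\mid n\iff bm\mid bn=(bn+1)-1$, so the integers $n\in(x,2x]$ with $m\mid n$ and $bn+1$ prime correspond bijectively to the primes $p\equiv1\pmod{bm}$ in $(bx+1,2bx+1]$ via $p=bn+1$. Comparing with $\li$ at the two endpoints therefore gives
\[
 \#\{n\in(x,2x]:m\mid n,\ bn+1\text{ prime}\}
 =\frac{\varphi(b)}{\varphi(bm)}\,X+r_m,\qquad |r_m|\le 2E^*(2bx;bm).
\]
Thus the sifting density is the multiplicative function $g(m)=\varphi(b)/\varphi(bm)$, with $g(p)=1/(p-1)$ for $p\nmid b$ and $g(p)=1/p$ for $p\mid b$; this is a sieve of dimension one, and the local density $g(2)$ equals $1/2$ rather than the degenerate value $1$ precisely because $b$ is even (for odd $b$ the primality of $bn+1$ would force $n$ to be even and the sieve would collapse).

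Next I would sift out the primes $p<y$ from the variable $n$ by a lower-bound sieve, taking the level of distribution to be $D=y^3$, which is admissible because $y^3\le x$. Using Selberg's method in its lower-bound form, the weights $\lambda_d$ are supported on $d\le\sqrt D=y^{3/2}$, satisfy $|\lambda_d|\le1$, and enter quadratically; expanding the quadratic form produces a main term $X\,V(y)\,(f(s)+o(1))$ with $V(y)=\prod_{p<y}(1-g(p))$ and $s=\log D/\log y=3$, plus a remainder $\sum_{d_1,d_2\le y^{3/2}}|r_{[d_1,d_2]}|$. Since the number of pairs $(d_1,d_2)$ with $[d_1,d_2]=m$ is $3^{\omega(m)}$ for squarefree $m$, and $|r_m|\le2E^*(2bx;bm)$, this remainder is at most $2\sum_{m\le y^3}3^{\omega(m)}E^*(2bx;bm)$, which is exactly the quantity subtracted in the statement.

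It then remains to bound $X\,V(y)\,f(s)$ from below. By Mertens' theorem $\prod_{p<y}(1-1/p)\asymp1/\log y$, and a factor-by-factor comparison gives $V(y)=\prod_{p<y}(1-1/p)\cdot\prod_{p<y,\ p\nmid b}(1-1/(p-1)^2)$; the second product is at least $\prod_{p\ge3}(1-1/(p-1)^2)$, an absolute positive constant (the term $p=2$ never occurs here because $b$ is even), so $V(y)\gg1/\log y$ uniformly in $b$. Because $s=3$ exceeds the threshold $2$ below which the one-dimensional lower-bound sieve is void, the leading constant is a positive absolute number (for the Rosser sieve it equals $\tfrac{2e^\gamma}{3}\log2>0$), and hence the main term is $\gg X\,V(y)\gg bx/(\varphi(b)\log(bx)\log y)$; absorbing the absolute constants into $c_1$ yields the claimed inequality, the role of $c_2$ being only to make $y$ large enough for the Mertens and sieve estimates to hold.

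The main obstacle is that this must be a genuine lower bound, not merely the easier upper bound: one has to be sure that the leading sieve constant is strictly positive and, more delicately, that its positivity is uniform in the parameter $b$. This is exactly what forces the level of distribution as high as $y^3$ (so that $s>2$, where the linear lower-bound sieve first becomes effective) and what makes the hypothesis that $b$ be even indispensable, since evenness is the one condition that keeps the local density at $p=2$ away from $1$.
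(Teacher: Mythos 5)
Your proposal is correct and follows essentially the same route as the paper: both recast the count as sifting the set $\{(\ell-1)/b:\ell\equiv1\ (\mathrm{mod}\ b)\ \text{prime}\}$ with local densities $1/(p-1)$ for $p\nmid b$ and $1/p$ for $p\mid b$, apply a linear lower-bound sieve at level $y^3$ (so $s=3>2$, with $f(3)=\tfrac23 e^\gamma\log 2$, exactly the constant invoked in the paper via Halberstam--Richert Theorem 8.3), bound the remainder by $2\sum_{m\le y^3}3^{\omega(m)}E^*(2bx;bm)$, and use Mertens together with the evenness of $b$ to get the main term uniformly in $b$. Your explicit explanation of why evenness of $b$ is indispensable, and your factor-by-factor verification that $\prod_{p<y}(1-g(p))\asymp 1/\log y$ with absolute constants, correctly substitute for the paper's appeal to the $\Omega_2(1,L)$ condition.
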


\begin{proof}
 We apply a standard lower bound sieve to the set
 \[
  \mathcal{A} = \left\{ \frac{\ell-1}{b} : \ell\text{ prime},~ \ell\in (bx+1,2bx],~\ell\equiv 1\pmod{b} \right\}.
 \]
With $\mathcal{A}_d$ the set of elements of $\mathcal{A}$ divisible by a squarefree integer $d$, we have
$|\mathcal{A}_d|=Xg(d)/d+r_d$, where
\[
 X = \frac{\li(2bx)-\li(bx+1)}{\varphi(b)}, \quad g(d)=\sprod{p\,|\,d \\ p\,\nmid\, b} \frac{p}{p-1}, 
 \quad |r_d|\le 2E^*(2bx;db).
\]
It follows that for $2\le v<w$,
\[
 \sum_{v\le p<w} \frac{g(p)}{p}\log p = \log\frac{w}{v} + O(1),
\]
the implied constant being absolute.  Apply \cite[Theorem 8.3]{HR} with $q=1$, $\xi=y^{3/2}$ and $z=y$, observing
that the condition $\Omega_2(1,L)$ of \cite[p. 142]{HR} holds with 
an absolute constant $L$.   
With the function $f(u)$ as defined in \cite[pp. 225--227]{HR}, we have $f(3)=\frac23e^\gamma\log2>\frac45$.
Then with $B_{19}$ the absolute constant in \cite[Theorem 8.3]{HR}, we have
\[
 f(3)-B_{19}\frac L{(\log \xi)^{1/14}} \ge \frac12
\]
for large enough $c_2$. 
We obtain the bound
\begin{align*}
 \#\{ x<n\le 2x: bn+1 \text{ prime, } & P^-(n)>y\} 
\ge \frac{X}{2} \prod_{p\le y} \(1 - \frac{g(p)}{p} \) - \sum_{m\le \xi^2}
 3^{\omega(m)}|r_m| \\
 &\ge \frac{c_1 bx}{\varphi(b)\log (bx)\log y} - 2\sum_{m\le y^3}
  3^{\omega(m)}E^*(2bx;bm). 
\end{align*}
This completes the proof.
\end{proof}

%
%
\section{The set-up}
%
%

If $n=\lambda(p_1p_2\dots p_k)$, where $p_1,p_2,\dots,p_k$ are distinct primes,
then we have $n=\lcm[p_1-1,p_2-1,\dots,p_k-1]$.  If we further assume
that $n$ is squarefree and consider the Venn diagram with the sets
$S_1,\ldots,S_k$ of the prime factors of $p_1-1,\ldots,p_k-1$, respectively, 
then this equation gives an ordered
factorization of $n$ into $2^k-1$ factors (some of which may be the
trivial factor $1$).  Here we ``see" the shifted primes
$p_i-1$ as products of certain subsequences of $2^{k-1}$ of these factors.  Conversely, given $n$
and an ordered factorization of $n$ into $2^k-1$ factors, we can ask how likely it is
for those $k$ products of $2^{k-1}$ factors to all be shifted primes.  Of course, this is not
likely at all, but if $n$ has many prime factors, and so many factorizations, our odds
improve that there is at least one such ``good" factorization.  For example, when
$k=2$, we factor a squarefree number $n$ as $a_1a_2a_3$, and we ask for $a_1a_2+1=p_1$ and $a_2a_3+1=p_2$ to
both be prime.  If so, we would have $n=\lambda(p_1p_2)$.  The heuristic argument from \cite{LP}
was based on this idea.  In particular, if a squarefree $n$ is even and has
at least $\theta_k\log\log n$ odd prime
factors (where $\theta_k > k/\log(2^k-1)$ is fixed and $\theta_k\to 1/\log 2$
as $k\to\infty$)
then there are so many factorizations of $n$ into $2^k-1$ factors,
that it becomes likely that $n$ is a $\lambda$-value.  The lower bound proof from \cite{LP} concentrated
just on the case $k=2$, but here we attack the general case.  As in \cite{LP}, we let $r(n)$ be
the number of representations of $n$ as the $\lambda$ of a number with $k$ primes.  To see that $r(n)$ is
often positive, we show that it's average value is large, and that the average value of $r(n)^2$ is
not much larger.  Our conclusion will follow from Cauchy's inequality.

Let $k\ge 2$ be a fixed integer, let $x$ be sufficiently large (in terms of $k$), and put
\be\label{ky}
y = \exp\Big\{ \frac{\log x}{200k \log\log x} \Big\}, \qquad l = \flr{\frac{k}{(2^k-1)\log(2^k-1)}\log\log y}.
\ee
For $n\le x$, let $r(n)$ be the number of representations of $n$ in the 
form 
\be\label{nab}
n = \prod_{i=0}^{k-1} a_i \prod_{j=1}^{2^k-1} b_j,
\ee
where $P^+(b_j) \le y < P^-(a_i)$ for all $i$ and $j$, $2\mid b_{2^k-1}$, $\omega(b_j)=l$ for each $j$, $a_i>1$ for all $i$, 
and furthermore that $a_iB_i+1$ is prime for all $i$, where
\be\label{Bi}
B_i = \prod_{\flr{j/2^i}\textrm{ odd}} b_j.
\ee
Observe that each $B_i$ is even since it is a multiple of $b_{2^k-1}$ 
(because $\lfloor (2^k-1)/2^i\rfloor=2^{k-i}-1$ is odd), 
each $B_i$ is the product of $2^{k-1}$ of the numbers $b_j$, and that
every $b_j$ divides $B_0\cdots B_{k-1}$.  
Also, if $n$ is squarefree and $r(n)>0$,
then the primes $a_iB_i+1$ are all distinct and it follows that
\[
 n = \lambda \Bigg( \prod_{i=0}^{k-1} (a_iB_i+1) \Bigg),
\]
therefore such $n\le x$ are counted by $V_{\lambda}(x)$. 
We count how often $r(n)>0$ using Cauchy's inequality in the following standard way:
\be\label{S1S2}
 \# \{ 2^{-2k} x < n \le x : \mu^2(n)=1, r(n) > 0 \} \ge \frac{S_1^2}{S_2},
\ee
where
\[ 
  S_1=\sum_{2^{-2k}x<n\le x} \mu^2(n) r(n),
 \qquad S_2 = \sum_{2^{-2k}x<n\le x} \mu^2(n) r^2(n).
\]
Our application of Cauchy's inequality is rather sharp, as we will
show below that $r(n)$ is approximately 1 on average over the kind of
integers we are interested in, both in mean and 
in mean-square.  More precisely, in the next section, we prove 
\be\label{S1lower}
S_1 \gg \frac{x}{(\log x)^{\beta_k} (\log\log x)^{O_k(1)}},
\ee
and in the final section, we prove
\be\label{S2upper}
S_2 \ll \frac{x (\log\log x)^{O_k(1)}}{(\log x)^{\beta_k}},
\ee
where
\be\label{betak}
 \beta_k = 1 - \frac{k}{\log(2^k-1)} \(1 + \log\log(2^k-1) - \log k\).
\ee
Together, the inequalities \eqref{S1S2}, \eqref{S1lower} and \eqref{S2upper} imply that
\[
 V_\lambda(x) \gg \frac{x}{(\log x)^{\beta_k} (\log\log x)^{O_k(1)}}.
\]
We deduce the lower bound of Theorem \ref{mainthm} by noting that $\lim_{k\to \infty} \beta_k = \eta$.

Throughout, constants implied by the symbols $O$, $\ll$, $\gg$, and $\asymp$ may depend on $k$, but not on any other variable.

%
%
\section{The lower bound for $S_1$}
\label{sec:4}
%
%

For convenience, when using the sieve bound in Lemma \ref{sievelower}, we consider a 
slightly larger sum $S_1'$ than $S_1$, namely 
\[
 S_1' := \sum_{n\in\mathcal{N}} r(n),
\]
where $\mathcal{N}$ is the set of $n\in (2^{-2k} x,x]$ 
of the form $n=n_0n_1$ with $P^+(n_0)\le y < P^-(n_1)$ and $n_0$ squarefree.  That is, in
$S_1'$ we no longer require the numbers $a_0,\dots,a_{k-1}$ in \eqref{nab} to be squarefree.
The difference between $S_1$ and $S_1'$ is very small; indeed, putting
$h=2^k+k-1$, note that $r(n)\le\tau_h(n)$, so that we have by \eqref{nab} the estimate 
\be\label{eq:Esti1}
\begin{split}
 S_1'-S_1 &\le \ssum{n\le x \\ \exists p>y : p^2\,|\,n} \tau_h(n) \le \sum_{p>y} 
 \ssum{n\le x \\ p^2 \,|\, n} \tau_h(n) 
 \le\sum_{p>y}  \tau_h(p^2) \sum_{m\le x/p^2} \tau_h(m)\\
 &\le \sum_{p>y } \tau_h(p^2) \frac{x}{p^2} \sum_{m\le x} \frac{\tau_h(m)}{m}
 \ll \frac{x(\log x)^h}{y}.
\end{split}\ee
Here we have used the inequality $\tau_h(uv)\le\tau_h(u)\tau_h(v)$ as well as 
the easy bound 
\begin{equation}
\label{eq:tauh}
\sum_{m\le x} \frac{\tau_h(m)}{m}\ll (\log x)^h,
\end{equation}
which is similar to \eqref{tausum}.
By \eqref{nab}, the sum  $S_1'$ counts the number of $(2^{k-1}+k)$-tuples $(a_0,\ldots,a_{k-1},b_1,
\ldots,b_{2^k-1})$ satisfying
\be\label{nab1}
 2^{-2k}x < a_0\cdots a_{k-1} b_1 \cdots b_{2^k-1} \le x
\ee
and with $P^+(b_j) \le y < P^+(a_i)$ for every $i$ and $j$, $b_1\cdots b_{2^k-1}$ squarefree, $2\mid b_{2^k-1}$,
$\omega(b_j)=l$ for every $j$, $a_i>1$ for every $i$, and $a_iB_i+1$ prime for every $i$, where $B_i$ is defined
in \eqref{Bi}.
Fix numbers $b_1,\ldots,b_{2^k-1}$.  Then
\be\label{bprod}
 b_1 \cdots b_{2^k-1} \le y^{(2^k-1)l} \le y^{2\log\log x} = x^{1/100k}.
\ee
In the above, we used the fact that $k\le2\log(2^k-1)$. 
Fix also $A_0,\ldots,A_{k-1}$, 
each a power of 2 exceeding $x^{1/2k}$, and such that 
\be\label{Arange}
 \frac{x}{2b_1\cdots b_{2^k-1}} < A_0 \cdots A_{k-1} \le \frac{x}{b_1\cdots b_{2^k-1}}.
\ee
Then \eqref{nab1} holds whenever $A_i/2 < a_i \le A_i$ for each $i$.
By Lemma \ref{sievelower}, using the facts that $B_i/\varphi(B_i)\ge 2$ 
(because $B_i$ is even) and $A_iB_i\le x$ (a consequence of \eqref{Arange}), 
we deduce that the number of choices for each $a_i$ is at least
\[
 \frac{c_1 A_i}{\log x \log y} - 2 \sum_{m\le y^3} 3^{\omega(m)} E^*(A_i B_i; mB_i).
\]
Using the elementary inequality
\[
 \prod_{j=1}^k \max(0,x_j-y_j) \ge \prod_{j=1}^k x_j - \sum_{i=1}^k y_i \prod_{j\ne i} x_j,
\]
valid for any non-negative real numbers $x_j,y_j$, we find that the number of admissible
$k$-tuples $(a_0,\ldots,a_{k-1})$ is at least
\renewcommand{\AA}{\mathbf{A}}
\begin{align*}
  \frac{c_1^k A_0\cdots A_{k-1}}{(\log x \log y)^k} &- \frac{2c_1^{k-1}A_0\cdots A_{k-1}}
  {(\log x \log y)^{k-1}} \sum_{i=0}^{k-1} \frac{1}{A_i} \sum_{m\le y^3} 3^{\omega(m)} E^*(A_i B_i;mB_i)\\
  &=M(\AA,\bb) - R(\AA,\bb),
\end{align*}
say.  By symmetry and \eqref{Arange},
\begin{equation}
\label{eq:Esti2}
 \sum_{\AA,\bb} R(\AA,\bb) \ll \frac{x} {(\log x \log y)^{k-1}}  \sum_{\bb} 
 \frac{1}{b_1\cdots b_{2^k-1}} \sum_{\AA} \frac{1}{A_0} \sum_{m\le y^3} 3^{\omega(m)} E^*(A_0B_0;mB_0),
\end{equation}
where the sum on $\bb$ is over all $(2^{k}-1)-$tuples satisfying
$b_1\cdots b_{2^k-1} \le x^{1/100k}$.  Write $b_1\cdots b_{2^k-1}=B_0 B_0'$,
where $B_0'=b_2b_4\cdots b_{2^k-2}$.  Given $B_0$ and $B_0'$, the number
of corresponding tuples $(b_1,\ldots,b_{2^k-1})$ is at most $\tau_{2^{k-1}}(B_0) \tau_{2^{k-1}-1} (B_0')$.
Suppose $D/2<B_0\le D$, where $D$ is a power of 2.
Since $E^*(x;q)$ is an increasing function of $x$, $E^*(A_0B_0;mB_0)\le E^*(A_0D;mB_0)$.
Also, $3^{\omega(m)}\le \tau_3(m)$ and
\[
\sum_{B_0'\le x} \frac{\tau_{2^{k-1}-1} (B_0')}{B_0'} \ll (\log x)^{2^{k-1}-1}.
\]
(this is \eqref{eq:tauh} with $h$ replaced by $2^{k-1}-1$). We therefore deduce that
\[
 \sum_{\AA,\bb} R(\AA,\bb) \ll \frac{x(\log x)^{2^{k-1}-1}} {(\log x \log y)^{k-1}}
 \sum_{\AA} \frac{1}{A_0} \sum_{D} \frac{1}{D} \ssum{D/2<B_0\le D \\ m\le y^3} 
 \tau_3(m) \tau_{2^{k-1}}(B_0) E^*(A_0D;mB_0),
 \]
the sum being over $(A_0,\ldots,A_{k-1},D)$, each a power of 2, $D\le x^{1/100k}$, $A_i\ge x^{1/2k}$ for each $i$
and $A_0\cdots A_{k-1}D \le x$.  With $A_0$ and $D$ fixed, the number of choices for $(A_1,\ldots,A_{k-1})$
is $\ll (\log x)^{k-1}$.  Writing $q=mB_0$, we obtain
\begin{align*}
\sum_{\AA,\bb}&R(\AA,\bb)\\
&\ll x \frac{(\log x)^{2^{k-1}-1}}{(\log y)^{k-1}} \sum_{D\le x^{1/100k}} \sum_{x^{1/2k}<A_0 \le x/D} \frac{1}{A_0 D} 
\sum_{q\le y^3 x^{1/100k}} \tau_{2^{k-1}+3}(q) E^*(A_0 D;q)\\
 &\ll \frac{x}{(\log x)^{\beta_k+1}},
\end{align*}
where we used Corollary \ref{BVcor} in the last step with $A=2^{k-1}-k+4+\beta_k$.  

For the main term, by \eqref{Arange}, given any
$b_1,\ldots,b_{2^{k-1}}$, the product $A_0\cdots A_{k-1}$ is determined (and larger than $\frac12 x^{1-1/100k}$
by \eqref{bprod}),
so there are
$\gg (\log x)^{k-1}$ choices for the $k$-tuple $A_0,\ldots,A_{k-1}$.  Hence,
\[
 \sum_{\AA,\bb} M(\AA,\bb) \gg \frac{x}{(\log y)^k \log x} \sum_\bb \frac{1}{b_1\cdots b_{2^k-1}}.
\]
Let $b=b_1\cdots b_{2^k-1}$.  Given an even, squarefree integer $b$, 
the number of ordered factorizations of $b$ as $b=b_1\cdots b_{2^k-1}$,
where each $\omega(b_i)=l$ and $b_{2^k-1}$ is even,
is equal to
${\displaystyle{ \frac{((2^k-1)l)!}{(2^k-1)(l!)^{2^k-1}}}}$. Let $b'=b/2$, so ${\displaystyle{ h:= \omega(b')=(2^k-1)l-1 = \frac{k\log\log y}{\log(2^k-1)} + O(1)}}$. 
Applying Lemma \ref{omega}, Stirling's formula and the fact that $(2^k-1)l=h+O(1)$, produces
\begin{align*}
 \sum_{\bb} \frac{1}{b_1\ldots b_{2^k-1}} &\ge \frac{((2^k-1)l)!}{2(2^k-1)(l!)^{2^k-1}} 
 \ssum{P^+(b')\le y \\ \omega(b')=h} \frac{\mu^2(b')}{b'} \\
 &\gg \frac{((2^k-1)l)!}{(l!)^{2^k-1}} \frac{(\log\log y)^h}{h!}
=\frac{(\log\log y)^h}{(l!)^{2^k-1}}(\log\log x)^{O(1)}\\
 &= \left[ \frac{(2^k-1) \er \log (2^k-1)}{k} \right]^{(2^k-1)l} (\log\log x)^{O(1)}\\ 
 & =  (\log y)^{\frac{k}{\log(2^k-1)} \log\left[\frac{(2^k-1)\er \log(2^k-1)}{k}\right]} (\log\log x)^{O(1)}\\ 
 & =  (\log y)^{k -\beta_k+1} (\log\log x)^{O(1)}.
\end{align*}
Invoking \eqref{ky}, we obtain that
\begin{equation}
\label{eq:Esti3}
 \sum_{\AA,\bb} M(\AA,\bb) \ge \frac{x}{(\log x)^{\beta_k} (\log\log x)^{O(1)}}.
\end{equation}
Inequality \eqref{S1lower} now follows from the above estimate \eqref{eq:Esti3} and 
our earlier estimates \eqref{eq:Esti1} of $S_1'-S_1$ and \eqref{eq:Esti2} of $\sum_{\AA,\bb} R(\AA,\bb)$.

%
%
\section{A multivariable sieve upper bound}
%
%

Here we prove an estimate from sieve theory that will be useful in
our treatment of the upper bound for $S_2$.

\begin{lem}\label{sieveupper}
Suppose that
\begin{itemize}
\item $y,x_1,\dots,x_h$ are reals with $3<y\le 2 \min\{x_1,\ldots,x_h\}$;
\item  $I_1,\ldots,I_k$ are nonempty subsets of $\{1,\ldots,h\}$;
\item $b_1,\ldots,b_k$ are positive integers such that if $I_i=I_j$, then $b_i\ne b_j$.  
\end{itemize}
For ${\rm\bf n}=(n_1,\dots,n_h)$, a vector of positive integers
and for $1\le j\le k$, let
$N_j=N_j({\rm\bf n})=\prod_{i\in I_j} n_i$.  Then
\begin{multline*}
 \# \{ {\rm\bf n}: x_i<n_i\le 2x_i\, (1\le i\le h),\, P^-(n_1\cdots n_h)>y,
\, b_j N_j+1\text{ prime } (1\le j\le k) \} \\
 \ll_{h,k} \frac{x_1\cdots x_h}{(\log y)^{h+k}} \big(\log\log (3b_1\cdots b_k) \big)^{k}.
\end{multline*}
\end{lem}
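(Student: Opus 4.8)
The plan is to apply a multidimensional sieve (specifically a Selberg-type upper bound sieve or the weighted sieve of Halberstam--Richert in several variables) to the set of vectors $\mathbf{n}=(n_1,\dots,n_h)$ with $n_i\in(x_i,2x_i]$, where the sieving conditions impose that $P^-(n_1\cdots n_h)>y$ and that each of the $k$ linear forms $b_jN_j+1$ is prime. The essential point is that we are sieving out small prime factors from a $k+h$-dimensional set of conditions: for each prime $p\le y$ we must avoid $p\mid n_i$ for every $i$ (that is $h$ residue classes, though some may coincide modulo $p$) and also avoid $p\mid b_jN_j+1$ for each $j$ (another $k$ conditions). Generically these give $h+k$ distinct forbidden residue classes modulo each prime $p$, so the sieve dimension is $h+k$, which is exactly what produces the $(\log y)^{-(h+k)}$ savings in the main term.

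First I would set up the local densities. For a squarefree sifting modulus $d$ supported on primes in $(1,y]$, I would count vectors $\mathbf{n}$ modulo $d$ for which $d$ divides the product $n_1\cdots n_h \prod_j (b_jN_j+1)$ in the prescribed way, express this count as $X\,g(d)/d + r_d$ where $X\asymp x_1\cdots x_h$, and verify that the multiplicative density function $g$ satisfies a sieve dimension condition $\sum_{p\le w} g(p)(\log p)/p = (h+k)\log w + O(1)$. The $h$ comes from the conditions $p\mid n_i$; the $k$ comes from the prime-value conditions $p\nmid b_jN_j+1$, which for $p\nmid b_j$ contributes roughly one class, and the coprimality constraints $p\nmid n_i$ needed to keep $N_j$ sensible are folded in consistently. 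The hypothesis that $I_i=I_j$ forces $b_i\ne b_j$ is exactly what guarantees the $k$ prime conditions are genuinely distinct (two forms $b_iN_i+1$ and $b_jN_j+1$ with the same index set and same $b$ would coincide, collapsing the dimension), so this hypothesis feeds directly into the lower-bound estimate for the sieve density.

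Next I would apply the fundamental lemma / a higher-dimensional upper bound sieve to convert this into the bound
\[
 \# \{\cdots\} \ll_{h,k} X \prod_{p\le y}\Big(1-\frac{g(p)}{p}\Big) + (\text{error from }r_d).
\]
By Mertens-type estimates the product is $\asymp (\log y)^{-(h+k)}$, which yields the displayed power of $\log y$. The remaining task is to produce the factor $(\log\log(3b_1\cdots b_k))^k$: this arises because at the finitely many primes $p$ dividing some $b_j$, the corresponding linear form $b_jN_j+1$ is automatically coprime to $p$ and so does \emph{not} remove a residue class, meaning the local density there is larger than generic. Accounting for these exceptional primes multiplies the main term by $\prod_{p\mid b_1\cdots b_k}(1+O(1/p))\ll (\log\log(3b_1\cdots b_k))^{O(k)}$; the bookkeeping to pin the exponent at exactly $k$ (one factor of $\log\log$ per form) is where I would be most careful.

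The main obstacle will be controlling the error terms $r_d$, since in a genuinely multidimensional sieve the remainders accumulate over a $k+h$-fold range of moduli. Here, however, the error terms never need Bombieri--Vinogradov: because each $x_i\ge y/2$ and the sifting level is only up to $y$ (not a power of $x_i$), the modulus $d$ stays quite small relative to the $x_i$, so the trivial bound $|r_d|\ll \tau(d)^{O(1)}(x_1\cdots x_h)/d^{?}$ together with summing over $d\le$ (a small power of $y$) suffices — this is why the statement is uniform and requires no equidistribution input. I expect the cleanest route is to invoke \cite[Theorem 8.3]{HR} (the same reference used in Lemma \ref{sievelower}), verifying the $\Omega$-type dimension condition with dimension $h+k$ and checking that the level of distribution $\xi$ can be taken as a fixed small power of $\min_i x_i$, which dominates the error satisfactorily.
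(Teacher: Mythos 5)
Your proposal matches the paper's proof in all essentials: both treat the problem as an $(h+k)$-dimensional upper-bound sieve applied to the product $n_1\cdots n_h\prod_j(b_jN_j+1)$, extract the $(\log\log(3b_1\cdots b_k))^k$ factor from the degenerate local densities at primes dividing the $b_j$ (and the differences $b_i-b_j$ when $I_i=I_j$), and control the remainder terms trivially because the sifting level is only a small power of $\min_i x_i$, so no equidistribution input is needed. The only cosmetic difference is that the paper invokes Selberg's sieve (Theorem 6.2 of Halberstam--Richert) rather than Theorem 8.3.
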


\begin{proof}
Throughout this proof, all Vinogradov symbols $\ll$ and $\gg$ as well as the Landau symbol $O$ depend on both 
$h$ and $k$. Without loss of generality, suppose that $y \le (\min(x_i))^{1/(h+k+10)}$.  
Since $n_i>x_i\ge y^{h+k+10}$ for every $i$, we see that the number of $h$-tuples in question does not exceed
\[
 S:=\# \{\textbf{n} : x_i<n_i\le 2x_i\, (1\le i\le h),\, P^-(n_1\cdots n_h (b_1N_1+1)\cdots(b_{k}N_{k}+1))>y\}.
\]
We estimate $S$ in the usual way with sieve methods, although this 
is a bit more general than the standard applications and we give the proof in some detail (the case $h=1$ being 
completely standard).  Let $\mathcal{A}$ denote the multiset
\[
 \mathcal{A}=\Big\{ n_1 \cdots n_h \prod_{j=1}^k (b_j N_j+1) : x_j < n_j \le 2x_j \, (1\le j\le h) \Big\}.
\]
For squarefree $d\le y^2$ composed of primes $\le y$, we have by a simple counting argument
\[
 |\mathcal{A}_d| := \# \{ a\in\mathcal{A}  : d\mid a \} = \frac{\nu(d)}{d^h}X + r_d,
\]
where $X=x_1\cdots x_h$, $\nu(d)$ is the number of solution vectors {\bf n} 
modulo $d$ of the congruence
\[
 n_1 \cdots n_h \prod_{j=1}^k (b_j N_j+1) \equiv 0\pmod{d},
\]
and the remainder term satisfies, for $d\le \min(x_1,\ldots,x_h)$,
\begin{eqnarray*}
 |r_d|  & \le &  \nu(d) \sum_{i=1}^h 
\prod_{\substack{1\le l\le h\\ l\ne i}} 
\left(\left\lfloor \frac{x_l}{d}\right\rfloor+1\right) 
\le\nu(d)\sum_{i=1}^h \frac{(x_1+d)\cdots (x_h+d)}{(x_i+d) d^{h-1}}\\
 &  \ll & \frac{\nu(d) X}{d^{h-1}\min(x_i)}.
\end{eqnarray*}
The function $\nu(d)$ is clearly multiplicative and satisfies the global upper bound $\nu(p)\le (h+k)p^{h-1}$ for 
every $p$.  If $\nu(p)=p^h$ for some $p\le y$, then clearly $S=0$.  Otherwise, the hypotheses of 
\cite[Theorem 6.2]{HR} (Selberg's sieve) are clearly satisfied, with $\kappa=h+k$, and we deduce that
\[
 S \ll X \prod_{p\le y} \( 1 - \frac{\nu(p)}{p^h} \) + \ssum{d\le y^2 \\ P^+(d)\le y} \mu^2(d) 3^{\omega(d)}|r_d|.
\]
By our initial assumption about the size of $y$,
\[
 \sum_{d\le y^2} \mu^2(d) 3^{\omega(d)}|r_d| \ll \frac{X}{\min(x_i)}\sum_{d \le y^2} (3k+3h)^{\omega(d)} \ll
 \frac{Xy^3}{\min(x_i)}\ll \frac{X}{y}.
\]
For the main term, consideration only of the congruence $n_1\cdots n_h\equiv 0\pmod{p}$ shows that
\[\nu(p) \ge h(p-1)^{h-1}=hp^{h-1}+O(p^{h-2})\] for all $p$.  On the other hand, suppose that $p\nmid b_1\cdots b_k$
and furthermore that $p\nmid (b_i-b_j)$ whenever $I_i=I_j$.  
Each congruence $b_jN_j+1\equiv 0\pmod{p}$
has $p^{h-1}+O(p^{h-2})$ solutions with $n_1\dots n_h\not\equiv0\pmod p$, 
and any two of these congruences have $O(p^{h-2})$ common solutions. Hence,
$\nu(p)=(h+k)p^{h-1}+O(p^{h-2})$.  In particular,
\begin{equation}
\label{eq:nu}
\frac{h}{p}+O\left(\frac{1}{p^2}\right)\le \frac{\nu(p)}{p^h}\le \frac{h+k}{p}+O\left(\frac{1}{p^2}\right).
\end{equation}
Further, writing $E=b_1\cdots b_k \prod_{i\ne j} |b_i-b_j|$, the upper bound \eqref{eq:nu} above is in fact an equality except when $p\mid E$.  We obtain
\[
 \prod_{p\le y} \( 1 - \frac{\nu(p)}{p^h} \) \ll \prod_{p\le y} \(1-\frac{1}{p} \)^{k+h} \prod_{p\,|\,E} \(1-\frac{1}{p} \)^{-k}
 \ll \frac{(E/\varphi(E))^{k}}{(\log y)^{h+k}} \ll \frac{(\log\log 3E)^k}{(\log y)^{h+k}}
\]
and the desired bound follows.
\end{proof}

%
%
\section{The upper bound for $S_2$}
%
%

Here $S_2$ is the number of solutions of 
\be\label{nab2}
n = \prod_{i=0}^{k-1} a_i \prod_{j=1}^{2^k-1} b_j = \prod_{i=0}^{k-1} a_i' \prod_{j=1}^{2^k-1} b_j',
\ee
with $2^{-2k} x < n\le x$, $n$ squarefree, 
\[
P^+(b_1b_1'\cdots b_{2^k-1}b_{2^k-1}') \le y < P^-(a_0a_0'\cdots a_{k-1} a_{k-1}'),
\]
$\omega(b_j)=\omega(b_j')=l$ for every $j$, $a_i>1$ for every $i$, $2\mid b_{2^k-1}$, $2\mid b'_{2^k-1}$,
and $a_iB_i+1$ and $a_i'B_i'+1$ prime for $0\le i\le k-1$,
where $B_i'$ is defined analogously to $B_i$ (see \eqref{Bi}).  Trivially, we have
\be\label{ab2}
a := \prod_{i=0}^{k-1} a_i = \prod_{i=0}^{k-1} a_i', \qquad b := \prod_{j=1}^{2^k-1} b_j = \prod_{j=1}^{2^k-1} b_j'.
\ee

We partition the solutions of \eqref{nab2} according to the number of the primes $a_iB_i+1$ that are equal to one of
the primes $a_j'B_j'+1$, a number which we denote by $m$.  
By symmetry (that is, by appropriate permutation of the vectors $(a_0,\ldots,a_{k-1})$, $(a_0',\ldots,a_{k-1})$,
$(b_1,\ldots,b_{2^k-1})$ and $(b_1',\ldots,b_{2^k-1}')$
\footnote{The permutations may be described explicitly. Suppose that
$m\le k-1$ and that
we wish to permute $(b_1,\ldots,b_{2^k-1})$ in order that
$B_{i_1},\ldots,B_{i_m}$ become $B_0,\ldots,B_{m-1}$, respectively.
Let $S_i=\{1\le j\le 2^k-1:\flr{j/2^i} \text{ odd} \}$.  The Venn
diagram for the sets $S_{i_1},\cdots,S_{i_m}$ has $2^m-1$ components of
size $2^{k-m-1}$ and one component of size $2^{k-m-1}-1$, 
and we map the variables $b_j$ with $j$ in a given component to the
variables whose indices are in
the corresponding component of the Venn diagram for
$S_0,\ldots,S_{m-1}$.}), 
without loss of generality we may suppose that $a_i B_i = a_i'B_i'$
for $0\le i\le m-1$ and that 
\be\label{nonequal}
 a_i B_i \ne a_j B_j \qquad  (i\ge m, j\ge m).
\ee
Consequently,
\be\label{aiBi}
 a_i = a_i' \; (0\le i\le m-1), \qquad B_i = B_i' \; (0\le i\le m-1).
\ee

Now fix $m$ and all the $b_j$ and $b_j'$.  For $0\le i\le m-1$, place $a_i$
into a dyadic interval $(A_i/2,A_i]$, where $A_i$ is a power of 2.  
The primality conditions on the remaining variables are now coupled with the condition
\[
 a_m \cdots a_{k-1} = a_m' \cdots a_{k-1}'.
\]
To aid the bookkeeping, let $\a_{i,j} =\gcd(a_i,a_j')$ for $m\le i,j\le k-1$.  Then
\be\label{aalpha}
 a_i = \prod_{j=m}^{k-1} \a_{i,j}, \qquad a_j' = \prod_{i=m}^{k-1} \a_{i,j}.
\ee
As each $a_i>1,a_j'>1$, each product above contains at least one factor that is greater than 1. 
Let $I$ denote the set of pairs of indices $(i,j)$ such that
$\a_{i,j}>1$, and fix  $I$.  For $(i,j)\in I$, place $\a_{i,j}$ into a dyadic interval $(A_{i,j}/2,A_{i,j}]$, where
$A_{i,j}$ is a power of 2 and $A_{i,j} \ge y$.  By the assumption on the range of $n$, we have
\be\label{Aprod}
 A_0 \cdots A_{m-1} \prod_{(i,j)\in I} A_{i,j} \order \frac{x}{b}.
\ee
For $0\le i\le m-1$, we use Lemma \ref{sieveupper} (with $h=1$) to deduce that the number of
 $a_i$ with $A_i/2<a_i\le A_i$, $P^-(a_i)>y$ and $a_iB_i+1$ prime is
 \be\label{S2ai}
  \ll \frac{A_i \log\log B_i}{\log^2 y} \ll \frac{A_i (\log\log x)^3}{\log^2 x}.
 \ee
 Counting the vectors $(\a_{i,j})_{(i,j)\in I}$ subject to the conditions:
 \begin{itemize}
 \item $A_{i,j}/2<\a_{i,j}\le A_{i,j}$ and $P^-(\a_{i,j})>y$ for $(i,j)\in I$;
 \item $a_iB_i+1$ prime $(m\le i\le k-1)$;
 \item $a_j'B_j'+1$ prime $(m\le j\le k-1)$;
 \item condition \eqref{aalpha}
 \end{itemize}
 is also accomplished with Lemma \ref{sieveupper}, this time with $h=|I|$ and with 
 $2(k-m)$ primality conditions.  The hypothesis in the lemma concerning identical sets $I_i$, which may occur
 if $\a_{i,j}=a_i=a_j'$ for some $i$ and $j$, is satisfied by our assumption \eqref{nonequal}, which implies
 in this case that $B_i\ne B_j'$.
 The number of such vectors is at most
 \be\label{S2alphaij}
  \ll \frac{\prod_{(i,j)\in I} A_{i,j} (\log\log x)^{2k-2m}}{(\log y)^{|I|+2k-2m}} \ll
  \frac{\prod_{(i,j)\in I} A_{i,j} (\log\log x)^{|I|+4k-4m}}{(\log x)^{|I|+2k-2m}}.
 \ee
Combining the bounds \eqref{S2ai} and \eqref{S2alphaij}, and recalling \eqref{Aprod}, 
we see that the number of possibilities for the $2k$-tuple 
$(a_0,\ldots,a_{k-1},a_0'\ldots,a_{k-1}')$ is at most
\[
 \ll \frac{x (\log\log x)^{O(1)}}{b (\log x)^{|I|+2k}}.
\]
With $I$ fixed, there are $O((\log x)^{|I|+m-1})$ choices for the numbers $A_0,\ldots,A_{m-1}$ and the numbers
$A_{i,j}$ subject to \eqref{Aprod}, and there are $O(1)$ possibilities for $I$.  We infer that with 
$m$ and all of the $b_j,b'_j$ fixed, the number of possible $(a_0,\ldots,a_{k-1},a_0'\ldots,a_{k-1}')$ is bounded by
\[
 \ll \frac{x (\log\log x)^{O(1)}}{b(\log x)^{2k+1-m}}.
\]

We next prove that the identities in \eqref{aiBi} imply that
\be\label{Bv}
 B_{\vv} = B_{\vv}' \qquad (\vv\in \{0,1\}^m),
\ee
where $B_\vv$ is the product of all $b_j$ where the $m$ least significant
base-2 digits of $j$ are given by the vector $\vv$,
and $B_\vv'$ is defined analogously.  
Fix $\vv=(v_0,\ldots,v_{m-1})$.
For $0\le i\le m-1$ let $C_i=B_i$ if $v_i=1$ and
$C_i=b/B_i$ if $v_i=0$, and define $C_i'$ analogously.  By \eqref{Bi}, each number $b_j$, 
where the last $m$ base-2 digits of $j$ are equal to $\vv$, 
divides every $C_i$,
and no other $b_j$ has this property.  By \eqref{aiBi}, 
$C_i=C_i'$ for each $i$ and thus
\[
 C_0 \cdots C_{m-1} = C_0' \cdots C_{m-1}'.
\]
As the numbers $b_j$ are pairwise coprime,
in the above equality the primes having exponent $m$ on the left are 
exactly those dividing $B_\vv$, and
similarly the primes on the right side having exponent $m$ are 
exactly those dividing $B'_\vv$.  
This proves \eqref{Bv}.

Say $b$ is squarefree.  We count the number
of dual factorizations of $b$ compatible with both \eqref{ab2} and \eqref{Bv}.
Each prime dividing $b$ first ``chooses" which $B_\vv=B'_\vv$ to divide.
Once this choice is made, there is the choice of which
$b_j$ to divide and also which $b'_j$.  
For the $2^m-1$ vectors $\vv \ne \textbf{0}$, 
$B_\vv=B'_\vv$ is the product of $2^{k-m}$ 
numbers $b_j$ and also the product of $2^{k-m}$ numbers $b'_j$. 
Similarly, $B_\textbf{0}$ is the product of $2^{k-m}-1$ numbers $b_j$
and $2^{k-m}-1$ numbers $b'_j$.
Thus, ignoring that $\omega(b_j)=\omega(b_j')=l$ for each $j$ and that $b_{2^k-1}$ and $b'_{2^k-1}$ are
even, the number of dual factorizations of $b$ is at most
\be
\label{numb}
\((2^m-1)(2^{k-m})^2+(2^{k-m}-1)^2\)^{\omega(b)}
=\(2^{2k-m}-2^{k+1-m}+1\)^{\omega(b)}.
\ee

Let again
$$
h=\omega(b)=(2^k-1)l=\frac{k}{\log(2^k-1)}\log\log y+O(1),
$$ 
as in Section \ref{sec:4}. Lemma \ref{omega} and Stirling's formula give
\[
 \ssum{P^+(b)\le y \\ \omega(b)=h} \frac{\mu^2(b)}{b} \ll \frac{(\log\log y)^h}{h!} \ll
 \pfrac{\er \log (2^k-1)}{k}^h.
\] 
Combined with our earlier bound \eqref{numb} for the number of admissible ways to dual factor each $b$,
we obtain 
\begin{equation}
\label{eq:S2}
S_2 \ll \frac{x(\log\log x)^{O(1)}}{\log x}  \pfrac{\er \log (2^k-1)}{k}^h
\sum_{m=0}^k (\log y)^{m-2k+\frac{k}{\log(2^k-1)}\log(2^{2k-m}-2^{k+1-m}+1)}.
\end{equation}
For real $t\in [0,k]$, let $f(t)=k\log(2^{2k-t}-2^{k+1-t}+1)-(2k-t)\log(2^k-1).$  We have $f(0)=f(k)=0$ and
\[
 f''(t) = \frac{k(\log 2)^2 (2^{2k}-2^{k+1}) 2^{-t}}{(2^{2k-t}-2^{k+1-t}+1)^2} > 0.
\]
Hence, $f(t)<0$ for $0<t<k$. Thus, the sum on $m$ in \eqref{eq:S2} is $O(1)$, and \eqref{S2upper} follows. 

Theorem \ref{mainthm} is therefore proved.


\end{document}